\newtheorem{thm}{Theorem}
\newtheorem{cor}{Corollary}
\newtheorem{prop}{Proposition}
\newtheorem{defn}{Definition}
\newtheorem{lem}{Lemma}
\newtheorem{rmk}{Remark}
\newcommand{\st}[1]{\textup{st2}\left( #1\right)} 
\newcommand{\BE}[1]{T^\mathit{BE}_{#1}}
\newcommand{\fb}[1]{T^\mathit{fb}_{#1}}
\newcommand{\stBE}[1]{\st{\BE{#1}}}
\newcommand{\BT}[1]{\mathcal{BT}^\ast_{#1}}
\title{On the maximum value of the stairs2 index}
\author[1]{Bryan Currie\thanks{\url{bc479@njit.edu}}}
\author[1]{Kristina Wicke \thanks{\url{kristina.wicke@njit.edu}}}
\affil[1]{Department of Mathematical Sciences \protect \\New Jersey Institute of Technology, Newark, NJ, USA}
\date{}
\begin{document}
\maketitle

\begin{abstract}
\noindent Measures of tree balance play an important role in different research areas such as mathematical phylogenetics or theoretical computer science. The balance of a tree is usually quantified in a single number, called a balance or imbalance index, and several such indices exist in the literature. Here, we focus on the stairs2 balance index for rooted binary trees, which was first introduced in the context of viral phylogenetics but has not been fully analyzed from a mathematical viewpoint yet. While it is known that the caterpillar tree uniquely minimizes the stairs2 index for all leaf numbers and the fully balanced tree uniquely maximizes the stairs2 index for leaf numbers that are powers of two, understanding the maximum value and maximal trees for arbitrary leaf numbers is an open problem in the literature. In this note, we fill this gap by showing that for all leaf numbers, there is a unique rooted binary tree maximizing the stairs2 index. Additionally, we obtain recursive and closed expressions for the maximum value of the stairs2 index of a rooted binary tree with $n$ leaves.
\end{abstract}
\textit{Keywords:} Phylogenetic tree balance; stairs2 index; binary echelon tree

\section{Introduction} \label{Sec:Introduction}
Tree (im)balance indices play an important role in different research areas, ranging from phylogenetics and evolutionary biology, to cancer research, to theoretical computer science \cite{Fischer2023}. These indices quantify the balance or imbalance of a tree in a single number, and more than 20 such indices have been introduced in the literature over the years (see \citet{Fischer2023} for an overview). Many of them are widely used and well understood theoretically, while for others less is known regarding their mathematical properties. As indicated by \citet{Fischer2023}, an index for which there are still open questions regarding its combinatorial and statistical properties is the so-called \emph{stairs2} balance index. This index was originally introduced by \citet{Norstrom2012} in the context of phylodynamics and viral evolution, and was subsequently employed in a study by \citet{Colijn2014} in a study on the relationship of tree shape and transmission patterns underlying an outbreak of infectious diseases. 

While we give formal definitions in the next section, the stairs2 index intuitively measures the \enquote{staircase-ness} of a rooted binary tree by calculating the average ratio between the leaf numbers of the smaller and larger pending subtrees below an interior vertex across all inner vertices. It was recently classified by \citet{Fischer2023} as a balance index satisfying the following two criteria: (i) for all leaf numbers, the so-called caterpillar tree uniquely minimizes the stairs2 index, and (ii) for leaf numbers that are powers of two, the so-called fully balanced tree uniquely maximizes the stairs2 index. However, the maximum value and maximal trees for leaf numbers that are not powers of two have not been considered in the literature, and have been stated as open problems in \cite{Fischer2023}. 

In this manuscript, we fill these gaps in the literature. To be precise, we first prove that for all leaf numbers there is a unique rooted binary tree, which we call the \emph{binary echelon tree}, maximizing the stairs2 index. We then exploit the structure of the binary echelon tree to obtain both recursive and closed expressions for the maximum stairs2 index for a given leaf number $n$.

The remainder of this manuscript is organized as follows. In Section~\ref{Sec:Preliminaries} we review some general notation and definitions, before formally introducing the stairs2 index in Section~\ref{Sec:Stairs2}. In Section~\ref{Sec:Results} we then present our results concerning the maximal trees with respect to the stairs2 index and the maximum value itself. We end with some concluding remarks and directions for future research in Section~\ref{Sec:Discussion}.

\section{Preliminaries} \label{Sec:Preliminaries}
Before we can present our results, we need to introduce some general definitions and notation. We remark that throughout this paper, we mainly follow the terminology of \citet{Fischer2023}. Additionally, we remark that all logarithms are assumed to be base two logarithms. 

\paragraph{Rooted binary trees.}
A \emph{rooted tree} is a directed graph $T=(V(T), E(T))$, with vertex set $V(T)$ and edge set $E(T)$, containing precisely one vertex of in-degree zero, the root (denoted by $\rho$), such that for every $v \in V(T)$ there exists a unique path from $\rho$ to $v$, and such that there are no vertices with out-degree one. We use $V_L(T) \subseteq V(T)$ to refer to the leaf set of $T$ (i.e., $V_L(T) = \{v \in V(T): \text{out-degree}(v) = 0\}$), and we use $\mathring{V}(T)$ to denote the set of inner vertices of $T$ (i.e., $\mathring{V}(T) = V(T) \setminus V_L(T)$). Moreover, we use $n$ to denote the number of leaves of $T$, i.e., $n = \vert V_L(T) \vert$. Note that $\rho \in \mathring{V}(T)$ if $n\geq 2$. If $n=1$, $T$ consists of only one vertex, which is at the same time the root and its only leaf. For technical reasons, we will sometimes also consider the case $n=0$, in which case $T$ is the empty graph on zero vertices. 

A rooted tree is \emph{binary} if all inner vertices have out-degree two, and for every $n \in \mathbb{N}_{\geq 0}$, we use $\BT{n}$ to denote the set of (isomorphism classes) of rooted binary trees with $n$ leaves.
Since all trees in this paper are rooted and binary, we will often refer to them simply as \emph{trees}.

\paragraph{Depth and height.}
The \emph{depth} $\delta_T(v)$ of a vertex $v \in V(T)$ is the number of edges on the path from $\rho$ to $v$, and the height $h(T)$ of $T$ is the maximum depth of any leaf of $T$, i.e., $h(T) = \max\limits_{x \in V_L(T)} \delta_T(x)$.

\paragraph{Pending subtrees and decomposition of rooted binary trees.}
Given a tree $T$ and a vertex $v \in V(T)$, we denote by $T_v$ the pending subtree of $T$ rooted in $v$ and use $n_T(v)$ (or simply $n_v$ if there is no ambiguity) to denote the number of leaves in $T_v$. 
We will often decompose a rooted binary tree $T$ on $n \geq 2$ leaves into its two maximal pending subtrees rooted in the children of $\rho$. We denote this decomposition as $T = (T_1,T_2)$. We let $n_1$ and $n_2$ denote the number of leaves of $T_1$ and $T_2$, and, if not stated otherwise, assume that $n_1 \geq n_2$. 

\paragraph{Special trees.}
Finally, we need to introduce some specific families of trees that will be important throughout this manuscript. 

First, the \emph{fully balanced tree of height $h$} with $h \in \mathbb{N}_{\geq 0}$, denoted as $\fb{h}$, is the unique rooted binary tree with $n=2^h$ leaves in which all leaves have depth exactly $h$. Note that for $h \geq 1$ (i.e., $n \geq 2$) both maximal pending subtrees of a fully balanced tree are again fully balanced trees, and we have $\fb{h} = \left( \fb{h-1}, \fb{h-1} \right)$.

Second, the \emph{binary echelon tree} on $n$ leaves, denoted as $\BE{n}$, is recursively defined as follows: If $n=0$, $\BE{n}$ is the empty tree and if $n=1$, $\BE{n}$ consists of a single vertex. If $n \geq 2$, let $n/2 \leq k < n$ be a power of two. Then, $\BE{n} = \left(\fb{\log k}, \BE{n-k}\right)$, i.e., $\BE{n}$ consists of fully balanced subtree on $k$ leaves and a binary echelon tree on $n-k$ leaves (see Figure~\ref{fig:BE} for an illustration). Notice that $k$ is uniquely defined. Thus, the binary echelon tree is unique (up to isomorphism) for all $n$. Moreover, notice that $\BE{1}=\fb{0}$ and $\BE{2}=\fb{1}$. More generally, if $n=2^h$ for some $h \in \mathbb{N}_{\geq 1}$, then $k = 2^{h-1}$, and $\BE{n} = \left( \fb{h-1}, \fb{h-1}\right)$, i.e., $\BE{2^h}=\fb{h}$.

\begin{figure}[htbp]
    \centering
    \includegraphics[scale=0.125]{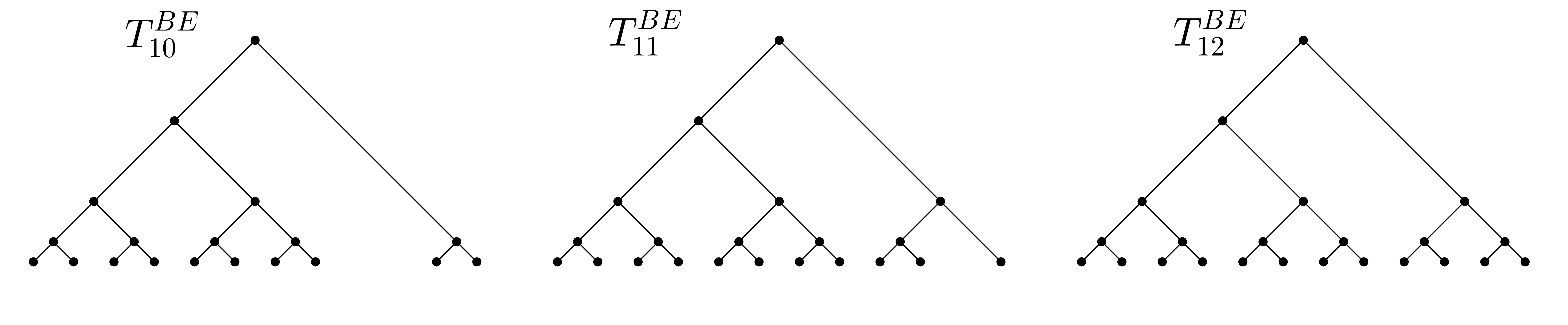}
    \caption{The binary echelon tree $\BE{n}$ for $n \in \{10,11,12\}$. Notice that in all three cases the larger subtree of $\BE{n}$ is the fully balanced tree of height three, $\fb{3}$.}
    \label{fig:BE}
\end{figure}

\section{The stairs2 index} \label{Sec:Stairs2}
We now introduce the central concept of this manuscript, namely the stairs2 index. This index was first introduced by \citet{Norstrom2012} (see also \citet{Colijn2014}) and is formally defined as follows.

\begin{defn}
    The \emph{stairs2 index} $\st{T}$ of a binary tree $T \in \BT{n}$ is defined as follows. If $n \in \{0,1\}$, we set $\st{T} \coloneqq 0$. If $n \geq 2$
    \begin{align*}
        \st{T} \coloneqq \frac{1}{n-1} \sum\limits_{v \in \mathring{V}(T)} \frac{\min \{n_{v_1}, n_{v_2}\}}{\max \{n_{v_1}, n_{v_2}\}},
    \end{align*}
    where $n_{v_1}$ and $n_{v_2}$ denote the number of leaves in the two maximal pending subtrees rooted in the children of $v$.
\end{defn}
In other words, the stairs2 index calculates the mean ratio between the leaf numbers of the smaller and larger pending subtrees over all inner vertices of a rooted binary tree. 

Note that the stairs2 index can be computed recursively, a property that was established by \cite{Fischer2023} and that will be used frequently throughout this manuscript.

\begin{lem}[adapted from {\citet[Proposition~23.25]{Fischer2023}}] \label{l:recursive}
   For every binary tree $T \in \BT{n}$ with $n \geq 2$ and standard decomposition $T=(T_1,T_2)$ such that $n_1 \geq n_2$, we have
   \begin{align*}
       \st{T} &= \frac{(n_1-1) \cdot \st{T_1} + (n_2-1) \cdot \st{T_2} + \frac{n_2}{n_1}}{n_1 + n_2 - 1}.
   \end{align*}
\end{lem}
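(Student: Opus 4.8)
The plan is to argue directly from the definition of the stairs2 index, splitting the sum over inner vertices of $T$ according to the decomposition $T=(T_1,T_2)$. The starting observation is that the inner vertices of $T$ partition as $\mathring{V}(T) = \{\rho\} \sqcup \mathring{V}(T_1) \sqcup \mathring{V}(T_2)$, and that for any vertex $v$ lying in $T_i$ the pending subtree $T_v$ is the same whether $v$ is regarded as a vertex of $T$ or of $T_i$; in particular the leaf counts $n_{v_1}, n_{v_2}$ appearing in the summand are unaffected by this change of viewpoint. Hence, writing $n = n_1 + n_2$,
\begin{align*}
\sum_{v \in \mathring{V}(T)} \frac{\min\{n_{v_1}, n_{v_2}\}}{\max\{n_{v_1}, n_{v_2}\}}
= \frac{\min\{n_1, n_2\}}{\max\{n_1, n_2\}}
+ \sum_{i=1}^{2} \; \sum_{v \in \mathring{V}(T_i)} \frac{\min\{n_{v_1}, n_{v_2}\}}{\max\{n_{v_1}, n_{v_2}\}},
\end{align*}
where the first term is the contribution of the root $\rho$, whose children root $T_1$ and $T_2$.

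Next I would rewrite each of the three pieces. Since $n_1 \geq n_2$ by the choice of decomposition, the root term equals $n_2/n_1$. For the inner sums, I would note that whenever $n_i \geq 2$ the definition of $\st{T_i}$ gives $\sum_{v \in \mathring{V}(T_i)} \frac{\min\{n_{v_1},n_{v_2}\}}{\max\{n_{v_1},n_{v_2}\}} = (n_i - 1)\,\st{T_i}$, and whenever $n_i \in \{0,1\}$ both sides of this identity vanish (the sum is empty because $\mathring{V}(T_i) = \emptyset$, and $(n_i-1)\st{T_i}=0$ by the convention $\st{T_i} = 0$). So the identity $\sum_{v \in \mathring{V}(T_i)} \frac{\min\{n_{v_1},n_{v_2}\}}{\max\{n_{v_1},n_{v_2}\}} = (n_i-1)\st{T_i}$ holds for $i = 1,2$ in all cases. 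Substituting these into the displayed equation and dividing by $n-1 = n_1+n_2-1$ yields precisely the claimed formula.

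There is essentially no hard step here; the only points requiring a little care are (a) verifying that the vertex-wise leaf counts are preserved when passing to a pending subtree, so that the summands genuinely match, and (b) handling the degenerate cases $n_i \in \{0,1\}$ uniformly, so that the recursion remains valid when one of $T_1, T_2$ is a single leaf (e.g.\ when $T$ is a cherry with $n=2$). Once these are checked, the result is a one-line rearrangement. One may also note that $n_i = 0$ cannot in fact occur, since both children of $\rho$ root subtrees with at least one leaf; allowing for it nonetheless costs nothing and keeps the bookkeeping uniform.
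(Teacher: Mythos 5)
Your proof is correct. The paper does not actually prove this lemma---it is imported from \citet{Fischer2023} as Proposition~23.25---so there is no in-paper argument to compare against; your direct verification (partitioning $\mathring{V}(T)$ into $\{\rho\}$, $\mathring{V}(T_1)$, and $\mathring{V}(T_2)$, observing that the per-vertex ratios are unchanged when a vertex is viewed inside its pending subtree, and absorbing the degenerate cases $n_i \in \{0,1\}$ via the convention $\st{T_i}=0$) is the standard derivation and is complete, including the check that the root contributes $n_2/n_1$ because $n_1 \geq n_2$.
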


\begin{rmk}
    We remark that Lemma~\ref{l:recursive} also holds if $T_2$ is the empty tree (in which case $T_1 = T, n_1=n, n_2=0$, and $\st{T_2}=0$), and we will use this fact freely throughout this manuscript. 
\end{rmk}

A second property established by \cite{Fischer2023} concerns the maximum value and maximal trees of the stairs2 index. 

\begin{prop}[{\citet[p.~319--320]{Fischer2023}}] \label{prop:fb}
    For every binary tree $T \in \BT{n}$ with $n \geq 2$, the stairs2 index fulfills $\st{T} \leq 1$. Moreover, for any given $n = 2^h$ with $h \in \mathbb{N}_{\geq 1}$, there is exactly one tree in $\BT{n}$ reaching this upper bound, namely the fully balanced tree $\fb{h}$.
\end{prop}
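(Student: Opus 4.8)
The plan is to treat the two assertions separately, in both cases by unwinding the definition of $\st{\cdot}$ and then examining when equality can occur.

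For the bound, note that every summand $\frac{\min\{n_{v_1},n_{v_2}\}}{\max\{n_{v_1},n_{v_2}\}}$ lies in $(0,1]$, since for an inner vertex $v$ of a binary tree both $n_{v_1}$ and $n_{v_2}$ are at least $1$ and the numerator never exceeds the denominator. A rooted binary tree with $n$ leaves has $2n-1$ vertices in total and hence exactly $n-1$ inner vertices (a one-line induction on $n$, or an edge count), so $\sum_{v\in\mathring{V}(T)}\frac{\min\{n_{v_1},n_{v_2}\}}{\max\{n_{v_1},n_{v_2}\}}\le n-1$, and dividing by $n-1$ gives $\st{T}\le 1$. Since a sum of $n-1$ numbers that are each at most $1$ equals $n-1$ if and only if every one of them equals $1$, this also characterises the equality case: $\st{T}=1$ if and only if $n_{v_1}=n_{v_2}$ for every $v\in\mathring{V}(T)$, i.e.\ if and only if at every inner vertex the two maximal pending subtrees have the same number of leaves.

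It then remains to show that, for $n=2^h$, the fully balanced tree $\fb{h}$ is the unique tree with this \enquote{every split is even} property. That $\fb{h}$ has the property is clear --- the vertex at depth $d$ lying above $2^{h-d}$ leaves splits them into two subtrees with $2^{h-d-1}$ leaves each --- so $\st{\fb{h}}=1$; alternatively this follows from Lemma~\ref{l:recursive} by induction, as $\st{\fb{h}}=\frac{(2^{h-1}-1)\cdot 1+(2^{h-1}-1)\cdot 1+1}{2^h-1}=\frac{2^h-1}{2^h-1}=1$. For uniqueness I would induct on $h$. The case $h=1$ is immediate, since $\BT{2}$ has only one element. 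For $h\ge 2$, let $T=(T_1,T_2)\in\BT{2^h}$ satisfy $n_{v_1}=n_{v_2}$ at every inner vertex; applied to the root this forces $n_1=n_2=2^{h-1}$, and since every inner vertex of $T_i$ is an inner vertex of $T$, each $T_i\in\BT{2^{h-1}}$ inherits the same property at all of its inner vertices. By the induction hypothesis $T_1=T_2=\fb{h-1}$, hence $T=(\fb{h-1},\fb{h-1})=\fb{h}$.

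I do not expect a real obstacle here; the only step that needs a little care is the equality analysis --- recording that a mean of quantities each bounded by $1$ attains the value $1$ exactly when all of them equal $1$, and then converting the resulting combinatorial constraint (all splits even) into the recursive description of $\fb{h}$ via the induction above.
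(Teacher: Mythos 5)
Your proof is correct and complete; the only subtlety, the equality analysis showing $\st{T}=1$ forces $n_{v_1}=n_{v_2}$ at every inner vertex, is handled properly, and the induction establishing that this \enquote{all splits even} property characterises $\fb{h}$ when $n=2^h$ is sound. Note that the paper itself does not prove this proposition but imports it from \citet{Fischer2023}; your argument is the standard one for this result, so there is no substantive divergence to report.
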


In particular, \citet{Fischer2023} only showed uniqueness of the maximal tree and tightness of the upper bound when the leaf number is a power of two. 
In the next section, we establish an analogous result for when the leaf number is not necessarily a power of two. We show that for all $n \in \mathbb{N}_{\geq 1}$, there is a unique binary tree maximizing the stairs2 index, namely the binary echelon tree $\BE{n}$. Moreover, we give recursive and closed expressions for the maximum value of the stairs2 index in this case.

\section{Results} \label{Sec:Results}
\subsection{Trees with maximum stairs2 index} \label{Subsec:MaxTrees}
The main aim of this section is to establish the following theorem concerning the maximal trees for the stairs2 index.

\begin{thm} \label{thm:BE-unique}
    For any given $n \in \mathbb{N}_{\geq 1}$, there is precisely one binary tree $T \in \BT{n}$ with maximum stairs2 index, namely the binary echelon tree $\BE{n}$.
\end{thm}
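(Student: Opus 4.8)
The plan is to prove Theorem~\ref{thm:BE-unique} by strong induction on $n$, using the recursive formula of Lemma~\ref{l:recursive} to reduce the problem of finding the optimal tree to the problem of finding the optimal \emph{split} $n = n_1 + n_2$ with $n_1 \geq n_2$ together with optimal subtrees $T_1 \in \BT{n_1}$ and $T_2 \in \BT{n_2}$. The base cases $n = 1, 2$ are immediate since $\BT{1}$ and $\BT{2}$ are singletons. For the inductive step, suppose the claim holds for all leaf numbers smaller than $n$. Given any $T = (T_1, T_2) \in \BT{n}$ with $n_1 \geq n_2 \geq 1$, Lemma~\ref{l:recursive} writes $\st{T}$ as a weighted average involving $\st{T_1}$, $\st{T_2}$ and the term $n_2/n_1$; since the weights $(n_1 - 1)$, $(n_2 - 1)$, $(n_1 + n_2 - 1)$ depend only on the split and the coefficient of $\st{T_i}$ is nonnegative, the induction hypothesis lets us assume without loss of generality that $T_i = \BE{n_i}$. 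So the theorem reduces to the purely numerical claim:

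\begin{quote}
Define $g(n_1, n_2) = (n_1 - 1)\st{\BE{n_1}} + (n_2 - 1)\st{\BE{n_2}} + \tfrac{n_2}{n_1}$ for $n_1 + n_2 = n$, $n_1 \geq n_2 \geq 0$. Then $g(n_1, n_2)$ is uniquely maximized at the split induced by the binary echelon tree, i.e., at $n_1 = k$, the largest power of two with $n/2 \le k < n$ (and $n_2 = n-k$), except that when $n$ itself is a power of two the tie between $(n/2, n/2)$ and the echelon split is no tie at all because those coincide.
\end{quote}

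Here I would set $M(m) := (m-1)\st{\BE{m}}$ (with $M(0) = M(1) = 0$), so that $g(n_1, n_2) = M(n_1) + M(n_2) + n_2/n_1$ and we must show this is strictly maximized at $(k, n-k)$. The key structural input is an understanding of how $M(m)$ grows: from the definition of $\BE{m}$ and Lemma~\ref{l:recursive} one gets a clean recursion $M(m) = M(k_m) + M(m - k_m) + (m - k_m)/k_m$ where $k_m$ is the relevant power of two, and when $m = 2^j$ one has $\st{\BE{m}} = 1$ so $M(2^j) = 2^j - 1$. The heart of the argument is a convexity/superadditivity-type estimate showing that concentrating leaves into a full power-of-two block is always better: splitting off a fully balanced tree of the largest possible size $k$ beats any other choice of $n_1$. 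Intuitively, the function $M$ is "as large as possible" exactly on powers of two (where the ratio terms are all $1$), and any $n_1$ that is not the maximal admissible power of two either wastes the $n_2/n_1$ term or forces $\st{\BE{n_1}} < 1$-type losses inside $T_1$.

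The main obstacle will be this last numerical optimization: proving that $g(n_1,n_2) = M(n_1)+M(n_2)+n_2/n_1$ is \emph{strictly} maximized at $n_1 = k$ for all $n$, not just establishing it is a local maximum. I expect to need a second (nested) induction or a careful case analysis splitting on where $n_1$ sits relative to the powers of two bracketing $n$ --- e.g. comparing the candidate split $(k, n-k)$ against a competitor $(n_1, n-n_1)$ by repeatedly peeling off powers of two from both sides and tracking the accumulated ratio terms, using that $x \mapsto x/k$ is increasing while larger denominators in later peeled terms only help the echelon configuration. A useful lemma to isolate and prove first is a monotonicity/bound statement of the form $M(a) + M(b) + b/a \le M(a+b) $ with equality iff the pair $(a,b)$ is the echelon split of $a+b$ when $a \ge b$ and $a$ is a power of two dominating the echelon decomposition --- i.e., that $M(n) = g(k, n-k)$ and that this dominates all other $g(n_1,n_2)$; once that inequality is pinned down with its equality condition, the uniqueness in the theorem follows by unwinding both inductions. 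Verifying the equality case (to get \emph{precisely one} maximal tree) is where the bookkeeping is most delicate, since one must confirm that every suboptimal choice --- a suboptimal split at the top, or an optimal split with a non-echelon subtree (ruled out by strict induction only if $\st{\BE{n_i}}$ is strictly larger, which needs the $n_i \ge 2$ case of the inductive hypothesis to be strict) --- incurs a strict loss.
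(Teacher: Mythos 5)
Your overall architecture matches the paper's: induction on $n$, the observation that a maximal tree must have maximal subtrees (the paper isolates this as Lemma~\ref{l:max-subtrees}, proved exactly by your substitution argument), and the consequent reduction to showing that the split $(k, n-k)$ with $k = 2^{\lfloor \log n \rfloor}$ strictly beats every other split $(n_1, n_2)$. However, there is a genuine gap: that last reduction is the entire content of the theorem, and you do not prove it. Your proposed "useful lemma" --- that $M(n_1) + M(n_2) + n_2/n_1$ is uniquely maximized at the echelon split --- is not an auxiliary step but a restatement of the numerical claim itself, and your sketch of how to establish it ("peeling off powers of two from both sides and tracking the accumulated ratio terms") is not developed far enough to judge whether it closes. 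The intuition you offer (that $M$ is "as large as possible" on powers of two, so non-maximal $n_1$ "wastes" the ratio term) is correct in spirit but does not by itself yield the strict inequality, and in particular does not handle the case $n_1 < k$, where the competitor split does not even contain a maximal power-of-two block.

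For comparison, the paper's proof of this step requires two structurally different arguments. For $n_1 = k+p > k$ it uses an exchange argument: it rebuilds $T = (\BE{k+p}, \BE{j-p})$ into $T' = \bigl(\fb{\log k}, (\BE{p}, \BE{j-p})\bigr)$ and verifies via Lemma~\ref{l:recursive} that the only terms that change are the two ratio terms, reducing everything to the elementary inequality $\frac{p}{k} + \frac{j-p}{k+p} < \frac{j}{k} + \frac{\min\{p,j-p\}}{\max\{p,j-p\}}$. For $n_1 = k - p < k$ it instead compares $T$ directly with $\BE{n}$ and must invoke the inductive hypothesis on carefully chosen \emph{auxiliary} trees with fewer than $n$ leaves (a tree $\widetilde{T}$ on $j$ leaves, and a pair of trees on $c+j < n$ leaves) to control the term it calls $(\ast\ast)$; the equality/strictness bookkeeping there is exactly the "delicate" part you flag but defer. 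Until you supply an actual proof of the split-optimality claim --- including the strictness needed for uniqueness --- the proposal is a correct plan with the same skeleton as the paper's proof, but not a proof.
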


In order to prove this theorem, we require the following lemma, which states that if $T=(T_1,T_2)$ has maximum stairs2 index, its maximal pending subtrees $T_1$ and $T_2$ have maximum stairs2 index, too.

\begin{lem}\label{l:max-subtrees}
    Let $T=(T_1,T_2)$ be a rooted binary tree with $n \geq 2$ leaves. If $T$ has maximum stairs2 index on $\mathcal{BT}_n^\ast$, then $T_1$ and $T_2$ have maximum stairs2 index on $\mathcal{BT}_{n_1}^\ast$ and $\mathcal{BT}_{n_2}^\ast$, respectively.
\end{lem}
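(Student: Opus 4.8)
The plan is to argue by contradiction using the recursive formula in Lemma~\ref{l:recursive}, exploiting the fact that $\st{T}$ is, for fixed $n_1, n_2$, strictly increasing in both $\st{T_1}$ and $\st{T_2}$. Suppose $T = (T_1, T_2)$ with $n_1 \geq n_2$ has maximum stairs2 index on $\BT{n}$, but (say) $T_1$ does \emph{not} have maximum stairs2 index on $\BT{n_1}$. Then there exists $T_1' \in \BT{n_1}$ with $\st{T_1'} > \st{T_1}$. Form the tree $T' = (T_1', T_2)$, which also lies in $\BT{n}$ and has the same values $n_1, n_2$ in its standard decomposition. Applying Lemma~\ref{l:recursive} to both $T$ and $T'$,
\[
\st{T'} - \st{T} = \frac{(n_1 - 1)\bigl(\st{T_1'} - \st{T_1}\bigr)}{n_1 + n_2 - 1} > 0,
\]
since $n_1 \geq 2$ (because $n_1 \geq n_2$ and $n = n_1 + n_2 \geq 2$ forces $n_1 \geq 1$, and $n_1 = 1$ would force $n_2 = 1$, $\st{T_1} = 0$ already maximal, contradiction). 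This contradicts the maximality of $T$. The argument for $T_2$ is symmetric, with the caveat that one must handle the case $n_2 \in \{0,1\}$ separately — but there $\BT{n_2}$ contains a single tree (or $\st{T_2} = 0$ is trivially maximal), so there is nothing to prove.

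The one subtlety to address carefully is the swap in the standard decomposition: replacing $T_1$ by $T_1'$ keeps $n_1$ unchanged, so $n_1 \geq n_2$ still holds and $T' = (T_1', T_2)$ is genuinely in standard form, meaning Lemma~\ref{l:recursive} applies verbatim with the same coefficients. Replacing $T_2$ by $T_2'$ likewise leaves $n_2$ unchanged, so the inequality $n_1 \geq n_2$ is preserved and no reordering is needed. Thus no issue arises from the ordering convention, and the coefficients $(n_1 - 1)$, $(n_2 - 1)$, $(n_1 + n_2 - 1)$ are identical for $T$ and the modified tree.

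I do not anticipate a serious obstacle here; the lemma is essentially an immediate monotonicity consequence of the recursion. The only genuinely delicate point is bookkeeping around small leaf numbers ($n_1$ or $n_2$ equal to $0$ or $1$), where the subtree's stairs2 index is forced to be $0$ and is therefore automatically maximal, so the hypothesis of the lemma is vacuously satisfied for that subtree and only the other subtree needs the monotonicity argument. I would state the monotonicity fact — that for fixed $n_1 \geq n_2 \geq 0$ the right-hand side of Lemma~\ref{l:recursive} is nondecreasing (strictly increasing when the relevant coefficient is positive) in each of $\st{T_1}$ and $\st{T_2}$ — once at the outset, and then the contradiction is a one-line computation as above.
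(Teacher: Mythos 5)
Your proposal is correct and follows essentially the same route as the paper: assume a maximal pending subtree is not maximal, swap in a better one, and apply the recursion of Lemma~\ref{l:recursive} to get a strictly larger stairs2 index, contradicting the maximality of $T$. Your explicit treatment of the degenerate cases $n_1=1$ and $n_2\in\{0,1\}$ (where the subtree is trivially maximal, so the strict coefficient $(n_i-1)>0$ is guaranteed whenever the contradiction argument is actually invoked) is slightly more careful than the paper's write-up, but the substance is identical.
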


\begin{proof}
    For sake of contradiction, assume that $\st{T_1}$ is not maximal (the case when $\st{T_2}$ is not maximal is symmetrical). Then, there exists a $\widehat{T} \in \mathcal{BT}_{n_1}^\ast$ such that $\st{\widehat{T}} > \st{T_1}$. Consider the tree $\widetilde{T} = \left(\widehat{T}, T_2 \right) \in \BT{n}$ obtained by replacing $T_1$ by $\widehat{T}$ in $T$. Then, by Lemma~\ref{l:recursive},
    \begin{align*}
        \st{\widetilde{T}} &= \frac{(n_1-1) \cdot \st{\widehat{T}} + (n_2-1) \cdot \st{T_2} + \frac{\min \{n_1, n_2\}}{\max \{n_1, n_2\}}}{n_1 + n_2 - 1} \\
        &> \frac{(n_1-1) \cdot \st{T_1} + (n_2-1) \cdot \st{T_2} + \frac{\min \{n_1, n_2\}}{\max \{n_1, n_2\}}}{n_1 + n_2 - 1}
        = \st{T},
    \end{align*}
    implying that $\st{T}$ is not maximal. Thus, if $\st{T}$ is maximal, $\st{T_1}$ must be maximal, too.
\end{proof}

We are now in the position to prove Theorem~\ref{thm:BE-unique}. We will prove the statement by induction on $n$, using the fact that the two maximal pending subtrees of a maximal tree on $n$ leaves have to be maximal, too, and showing that only the partition of $n$ into $n_1$ and $n_2$ as induced by the binary echelon tree, yields the maximum stairs2 index. 

\begin{proof}[Proof of Theorem~\ref{thm:BE-unique}]
    We prove this statement by induction on $n$. For $n \in \{1,2,3\}$, $\BT{n}$ contains only one element (which is $\BE{n}$), and the statement trivially holds.

    Now assume that the claim holds for all binary trees with up to $n-1$ leaves, and consider a binary tree $(T_1,T_2)$ with $n \geq 3$ leaves. 

    If $n$ is a power of two, i.e., $n = 2^h$ for some $h \in \mathbb{N}_{\geq 1}$, the statement follows from Proposition~\ref{prop:fb} and the fact that $\BE{2^h}=\fb{h}$. 

    Thus, from now on we assume that $n$ is not a power of two, and therefore $\lfloor \log n \rfloor < \log n$.
    Let $T = (T_1, T_2) \in \BT{n}$ be a binary tree maximizing the stairs2 index, where $T_1$ and $T_2$ are binary trees with $n_1$ and $n_2$ leaves, respectively, and such that $n_1 \geq n_2$. By Lemma~\ref{l:max-subtrees}, $T_1$ and $T_2$ must have maximum stairs2 index, too, and so by the inductive hypothesis, they are binary echelon trees. In particular, $T = \left(\BE{n_1}, \BE{n_2} \right)$. To complete the proof, we need to show that $n_1 = 2^{\lfloor \log n \rfloor} \eqqcolon k$ (i.e., $k$ is the largest power of two less than $n$) and $n_2 = n - k \eqqcolon j$ (with $j < k$), which implies that $T = \left( \BE{k}, \BE{n-k} \right) = \left(\fb{\log k}, \BE{n-k} \right) = \BE{n}$.

    We prove this by showing that if $n_1 \neq k$ and $n_2 \neq j$ (with $k$ and $j$ as defined above), $T$ does not have maximum stairs2 index. We distinguish several cases:

    \begin{description}
        \item[Case 1:] First, assume that $n_1 = k+p$ and $n_2 = j-p$ with $0 < p < j$, i.e., 
        $$T = \left( \BE{k+p}, \BE{j-p} \right).$$ 
        We now consider the two maximal pending subtrees of $T$ and their stairs2 index more in-depth, exploiting the fact that they are both binary echelon trees.
        First, note that since $p < j < k$, we have $$\BE{k+p} = \left(\fb{\log k}, \BE{p}\right).$$ Using Lemma~\ref{l:recursive} and Proposition~\ref{prop:fb}, we thus obtain
        \begin{align}
            (k+p-1) \cdot \stBE{k+p} &= (k-1)\cdot \underbrace{\st{\fb{\log k}}}_{= 1} + (p-1)\cdot\stBE{p} + \frac{p}{k} \nonumber \\
            &= (k-1) +  (p-1)\cdot\stBE{p} + \frac{p}{k}. \label{BE-kp}
        \end{align}
        For $\BE{j-p}$, let $s$ denote the largest power of two less or equal to $j-p$ and let $t = j-p-s$. Then, $j-p = s+t$ with $t < s$ and 
        $$ \BE{j-p} = \left( \fb{\log s}, \BE{t} \right).$$
        Note that $t=0$ is possible, in which case $\BE{t}$ is the empty tree and $\stBE{t}=0$. Again, using Lemma~\ref{l:recursive} and Proposition~\ref{prop:fb}, we obtain 
         \begin{align}
            (j-p-1) \cdot \stBE{j-p} &= (s-1)\cdot \underbrace{\st{\fb{\log s}}}_{= 1} + (t-1)\cdot\stBE{p} + \frac{t}{s} \nonumber \\
            &= (s-1) +  (t-1)\cdot\stBE{t} + \frac{t}{s}. \label{BE-jp}
        \end{align}
        Using Lemma~\ref{l:recursive} and Equations~\eqref{BE-kp} and \eqref{BE-jp}, we thus obtain
        \begin{align}
            (n-1) \cdot \st{T} &= (k+p-1) \cdot \stBE{k+p} + (j-p-1) \cdot \stBE{j-p} + \frac{j-p}{k+p} \nonumber \\
            &= (k-1) +  (p-1)\cdot\stBE{p} + \frac{p}{k} \nonumber \\ 
            &\qquad  + (s-1) +  (t-1)\cdot\stBE{t} + \frac{t}{s} + \frac{j-p}{k+p}. \label{c1perturbed}
        \end{align}
        Now, consider the following tree
        $$ T' = \left(\fb{\log k}, \left(\BE{p}, \underbrace{\left(\fb{\log s}, \BE{t}\right)}_{= \BE{j-p}} \right) \right).$$ 
        We will show that $\st{T'} > \st{T}$ contradicting the maximality of $T$.
        Let $$\widehat{T} = \left(\BE{p}, \left(\fb{\log s}, \BE{t}\right) \right) = \left(\BE{p}, \BE{j-p} \right).$$ 
        Using Lemma~\ref{l:recursive}, Proposition~\ref{prop:fb}, and Equation~\eqref{BE-jp}, we obtain
        \begin{align} 
            (p+s+t-1) \cdot \st{\widehat{T}} &= (p-1) \cdot \stBE{p} + (s-1) +  (t-1)\cdot\stBE{t} + \frac{t}{s} + \frac{\min\{p,j-p\}}{\max\{p,j-p\}}, \label{BE-That}
        \end{align}
        and thus
        \begin{align}
            (n-1) \cdot \st{T'} &= (k-1) \cdot \underbrace{\st{\fb{\log k}}}_{=1} + (p+s+t-1) \cdot \st{\widehat{T}} + \frac{j}{k} \nonumber \\
            &= (k-1) + (p-1) \cdot \stBE{p} + (s-1) +  (t-1)\cdot\stBE{t} \nonumber \\
            &\qquad  + \frac{t}{s} + \frac{\min\{p,j-p\}}{\max\{p,j-p\}} + \frac{j}{k} \label{c1tprime}
        \end{align}
        Comparing Equations~\eqref{c1perturbed} and \eqref{c1tprime}, in order to show that $\st{T} < \st{T'}$, it suffices to show that
        \begin{align}
        \frac{p}{k} + \frac{j-p}{k+p} <  \frac{j}{k} + \frac{\min\{p,j-p\}}{\max\{p,j-p\}}. \label{ineqT2T1Case1}
        \end{align}
        To see that this inequality holds for $p \geq 1$, we distinguish two cases:
            \begin{itemize}
            \item If $\min \{p,j-p\} = p$, then Inequality~\eqref{ineqT2T1Case1} becomes
            \begin{equation*}
            \frac{p}{k} + \frac{j-p}{k+p} <  \frac{j}{k} + \frac{p}{j-p},
            \end{equation*}
            which is satisfied for any $p \geq 1$ since $\frac{p}{k} < \frac{p}{j-p}$ (as $j \leq k$ and $j-p > 0$) and $\frac{j-p}{k+p} < \frac{j}{k}$ (as $j-p < j$ and $k+p > k$).
     
            \item If $ \min \{p,j-p\} = j-p$, then Inequality~\eqref{ineqT2T1Case1} becomes
            \begin{equation*}
            \frac{p}{k} + \frac{j-p}{k+p} <  \frac{j}{k} + \frac{j-p}{p},
            \end{equation*}
            which is satisfied for any $p \geq 1$ since $\frac{p}{k} < \frac{j}{k}$ (as $
            0 < p < j$) and $\frac{j-p}{k+p} < \frac{j-p}{p}$ (as $k > 0$).
            \end{itemize}
        This shows that $\st{T'}>\st{T}$ as claimed, contradicting the maximality of $T$. 
        
        While this completes the proof in this case, we remark that $\st{T'} \leq \stBE{n}$ and thus, in particular, $\st{T} < \st{T'} \leq \stBE{n}$. To see this, recall that $T' = \left(\fb{\log k}, \widehat{T}\right)$, where $\widehat{T}$ is a binary tree on $j < n$ leaves. Therefore, by the inductive hypothesis, $\st{\widehat{T}} \leq \stBE{j}$ and thus, $\st{T'} \leq \stBE{n}$. 
        
        \item[Case 2:] Next, assume that $n_1 = k-p$ and $n_2 = j+p$ with $p > 0$ and $k - p \geq j+p$ (note that the second assumption ensures $n_1 \geq n_2$), i.e., 
        $$ T = \left(\BE{k-p}, \BE{j+p} \right).$$
        We now again analyze the two maximal pending subtrees of $T$ and their stairs2 index more in depth, exploiting the fact that they are both binary echelon trees. 
        \begin{itemize}
            \item For $\BE{j+p}$, let $a = 2^{\lfloor \log(j+p) \rfloor}$ denote the largest power of two less or equal to $j+p$, and let $b = j+p-a$. It is possible that $b = 0$, but certainly $a > b$. In particular,
            $$ \BE{j+p} = \left(\fb{\log a}, \BE{b} \right).$$
            \item Similarly, for $\BE{k-p}$, let $c  = 2^{\lfloor \log(k-p) \rfloor}$. We now argue that $c = k/2$. Certainly, $ c < k$, and so $c \leq k/2$. Suppose for contradiction that $c < k/2$. This implies $p > k/2$, and so $2p > k$. However, by assumption, $k-p \geq j+p$, which implies $k \geq j + 2p > 2p$, a contradiction. Thus, $c = k/2$ as claimed. Moreover, $k-p = k/2 + k/2 -p = c + (c-p)$, and thus we can conclude that
            $$ \BE{k-p} = \left( \fb{\log c}, \BE{c-p} \right).$$
        \end{itemize}
        Using Lemma~\ref{l:recursive} and Proposition~\ref{prop:fb}, we thus obtain
        \begin{align}\label{c2t3}
            (n-1) \cdot \st{T} &= (k-p-1) \cdot \stBE{k-p} + (j+p-1) \cdot \stBE{j+p} \nonumber \\
            &= (c-1) + (c-p-1) \cdot \stBE{c-p} + \frac{c-p}{c} \nonumber \\
            &\qquad + (a-1) + (b-1) \cdot \stBE{b} + \frac{b}{a} + \frac{j+p}{k-p}.
        \end{align}
        On the other hand, for the binary echelon tree on $n$ leaves, we have
        \begin{align}\label{BEn}
            (n-1) \cdot \stBE{n} &= \underbrace{(k-1)}_{=(2c-1)} \cdot \underbrace{\st{\fb{\log k}}}_{=1} + (j-1) \cdot \stBE{j} + \frac{j}{k} \nonumber \\
            &= (2c-1) + (j-1) \cdot \stBE{j} + \frac{j}{k},
        \end{align}
        where the last equality follows from the fact that $c = k/2$. 
        In the following, we will subtract Equation~\eqref{c2t3} from Equation~\eqref{BEn} and argue that the result is strictly positive, contradicting the maximality of $T$. We have
        \begin{align}\label{c2diff}
            (n-1) &\cdot \stBE{n} - (n-1) \cdot \st{T} \nonumber \\
            &= (2c-1) + (j-1) \cdot \stBE{j} + \frac{j}{k} - (c-1) - (c-p-1) \cdot \stBE{c-p} - \underbrace{\frac{c-p}{c}}_{= 1 - \frac{p}{c}}\nonumber \\
            &\qquad - (a-1) - (b-1) \cdot \stBE{b} - \frac{b}{a} - \frac{j+p}{k-p} \nonumber \\
            &= \underbrace{\left( (2c-1) - (c-1) - (a-1) - 1 \right)}_{= (c-a)} + \underbrace{\left(\frac{j}{k} - \frac{j+p}{k-p} + \frac{p}{c}  \right)}_{(\ast)} \nonumber \\
            &\qquad + \underbrace{\left( (j-1) \cdot \stBE{j} - (c-p-1) \cdot \stBE{c-p} - (b-1) \cdot \stBE{b} - \frac{b}{a} \right)}_{(\ast \ast)}
        \end{align}
        We now analyze the terms $(\ast)$ and $(\ast \ast)$ in Equation~\eqref{c2diff} further. First, we show that the expression in $(\ast)$ is equal to zero if $k-p = j+p$ and is strictly positive if $k-p > j+p$. To see this, notice that
        \begin{align*}
         \frac{j}{k} - \frac{j+p}{k-p} + \frac{p}{c} \geq 0 &\Leftrightarrow \frac{j}{k} + \frac{p}{c} \geq \frac{j+p}{k-p} \\
         &\Leftrightarrow \frac{j+2p}{k} \geq \frac{j+p}{k-p} \quad \text{ (since $c =k/2$ )} \\
         &\Leftrightarrow (k-p) (j+2p) \geq k(j+p) \\
         &\Leftrightarrow jk + 2kp- jp - 2p^2 \geq kp + jk \\
         &\Leftrightarrow kp \geq jp + 2p^2 \\
         &\Leftrightarrow k \geq j+2p \quad \text{ (since $p > 0$)} \\
         &\Leftrightarrow k- p \geq j + p,
        \end{align*}
        All inequalities are strict if $k-p > j+p$, whereas we have equality everywhere if $k-p = j+p$.

        Now, consider the first grouped term in Equation~\eqref{c2diff}, i.e., consider $c-a$. Since, $k-p \geq j+p$ by assumption, and $\lfloor \log ( \bullet )\rfloor $ is monotonic, we have $c \geq a$, and so $c-a \geq 0$. Moreover, both $a$ and $c$ are powers of two, and thus either $c=a$ or $c \geq 2a$. We now consider these two cases separately:

        \begin{description}
            \item[Case 2a:] First, assume that $c=a$. In this case, in order to complete the proof, we consider the expression in $(\ast \ast)$ further. Substituting $a$ for $c$, we obtain
            \begin{align}
                (j-1) \cdot \stBE{j} -  \left( (a-p-1) \cdot \stBE{a-p} + (b-1) \cdot \stBE{b} + \frac{b}{a} \right)\label{c2atree}
            \end{align}
            Now, consider the following tree:
            $$ \widetilde{T} = \left( \BE{a-p}, \BE{b} \right).$$
            Since $a+b-p = j$, $\widetilde{T}$ is a tree on $j$ leaves. Moreover, since $k-p \geq j+p$ and $a=c$, we must have $c-p \geq b$ and thus $a-p \geq b$. Thus, 
            \begin{align} \label{tildetree}
                (j-1) \cdot \st{\widetilde{T}} &= (a-p-1) \cdot \stBE{a-p} + (b-1) \cdot \stBE{b} + \frac{b}{a-p} \nonumber \\
                &\geq (a-p-1) \cdot \stBE{a-p} + (b-1) \cdot \stBE{b} + \frac{b}{a},
            \end{align}
            where the inequality follows from the fact that $a-p < a$ and $b \geq 0$, and thus $b/a \leq b/(a-p)$. In particular, the inequality is strict if $b > 0$.
            On the other hand, by the inductive hypothesis, $\st{\widetilde{T}} \leq \stBE{j}$. Combining this fact with Equations~\eqref{c2atree} and \eqref{tildetree}, we obtain
            \begin{align*}
               & (j-1) \cdot \stBE{j} -  \left( (a-p-1) \cdot \stBE{a-p} + (b-1) \cdot \stBE{b} + \frac{b}{a} \right) \\
               &\quad \geq (j-1) \cdot \stBE{j} - \left( (a-p-1) \cdot \stBE{a-p} + (b-1) \cdot \stBE{b} + \frac{b}{a-p} \right)\\
               &\quad = (j-1) \cdot \stBE{j} - (j-1) \cdot \st{\widetilde{T}} \\
               &\quad \geq 0.
            \end{align*}
            Here, the first inequality is strict if $b > 0$. Thus, the expression in $(\ast \ast)$ is non-negative for $b \geq 0$ and strictly positive for $b > 0$. 

            Recalling that by assumption $k-p \geq j+p$, we now return to Equation~\eqref{c2diff} and distinguish two cases:
            \begin{itemize}
                \item If $k-p > j + p$, then as shown above, the expression in $(\ast)$ is strictly positive. Since $c-a=0$ and the expression in $(\ast \ast)$ is non-negative (also shown above), the right-hand side of Equation~\eqref{c2diff} is strictly positive, implying that $\st{T} < \stBE{n}$. 
                \item If $k-p = j+p$, we can conclude that neither $k-p$ nor $j+p$ is a power of two. If they were two equal powers of two, then their sum $k-p+j+p=k+j=n$ would also be a power of two, contradicting the assumption that $n$ is not a power of two. This implies that in $j+p=a+b$, we have $b > 0$ (see description of tree $\BE{j+p}$). This in turn implies that the expression in $(\ast \ast)$ is strictly positive, the expression in $(\ast)$ is zero, and $(c-a)=0$, which again shows that the right-hand side of Equation~\eqref{c2diff} is strictly positive, implying that $\st{T} < \stBE{n}$. 
                \end{itemize}

            In summary, in both cases $\st{T} < \stBE{n}$, contradicting the maximality of $T$.
                       
            \item[Case 2b:] Now, assume that $c \geq 2a$. Since $a > b$ by assumption, this in particular implies $c > a+b$. In turn this implies that $c+(c-p)=k-p>j+p=a+b$ (because $c-p>0$) and so term $(\ast)$ in Equation~\eqref{c2diff} is strictly positive. In the following we show that additionally
            \begin{align}
                (c-a) + \underbrace{\left( (j-1) \cdot \stBE{j} - (c-p-1) \cdot \stBE{c-p} - (b-1) \cdot \stBE{b} - \frac{b}{a} \right)}_{(\ast \ast)} > 0, \label{c2bterm}
            \end{align}
            which in summary implies that the right-hand side of Equation~\eqref{c2diff} is strictly positive, implying that $\st{T} < \stBE{n}$ and thus completing the proof.

            Consider the following two trees: $$T_1 = \left(\BE{c}, \BE{j} \right) \text{ and } T_2 = \left( \left(\BE{a},\BE{b} \right), \BE{c-p} \right).$$ Since $a+b=j+p$, both are trees on $c+j$ leaves. Importantly, since $c$ is a power of two, and by assumption $c > a+b > j$, we have that $T_1 = \BE{c+j}$, i.e., $T_1$ is a binary echelon tree. Moreover, since $c > a+b$, $T_2$ must be distinct from the binary echelon tree on $c+j$ leaves. Thus, by the inductive hypothesis, $\st{T_1} > \st{T_2}$. In particular,
            \begin{align*}
                0 &< (c+j-1) \cdot \left( \st{T_1} - \st{T_2} \right) \\
                &= \left( (c-1) + (j-1) \cdot \stBE{j} + \frac{j}{c} \right) \\
                &\qquad - \left( (a-1) + (b-1) \cdot \stBE{b} + \frac{b}{a} + (c-p-1) \cdot \stBE{c-p} + \frac{\min\{j+p,c-p\}}{\max\{j+p,c-p\}}\right) \\
                &= (c-a) + \left( (j-1) \cdot \stBE{j} - (c-p-1) \cdot \stBE{c-p} - (b-1) \cdot \stBE{b} -   \frac{b}{a}\right) \\
                &\qquad + \frac{j}{c} - \frac{\min\{j+p,c-p\}}{\max\{j+p,c-p\}}.
            \end{align*}
            Comparing the last expression to Equation~\eqref{c2bterm}, in order to complete the proof, we need to show that the additional term
            \begin{align*}
                \frac{j}{c} - \frac{\min\{j+p,c-p\}}{\max\{j+p,c-p\}}
            \end{align*}
            is negative. To see this, we distinguish two cases:
            \begin{itemize}
            \item If $\min\{j+p,c-p\} = j+p$ this is clear. More formally, since $c-p \geq j+p > 0$
            \begin{align*}
            \frac{j}{c} - \frac{j+p}{c-p} < 0 &\Leftrightarrow j(c-p) - c(j+p) < 0 \\
            &\Leftrightarrow jc- jp - jc - pc < 0 \\
            &\Leftrightarrow -p(j+c) < 0,
            \end{align*}
            which is a true statement since both $p$ and $j+c$ are strictly positive. In particular, $(j/c)-((j+p)/(c-p) < 0$ as claimed.
            \item If $\min\{j+p,c-p\} = c-p$, we have
            \begin{align*}
             \frac{j}{c} - \frac{c-p}{j+p} < 0 
             &\Leftrightarrow j(j+p) -c(c-p) < 0\\
             &\Leftrightarrow j^2 + jp - c^2 + cp < 0 \\
             &\Leftrightarrow j^2 - c^2 < -p(j+c) \\
             &\Leftrightarrow (j-c)(j+c) < -p(j+c) \\
             &\Leftrightarrow j-c < -p \\
             &\Leftrightarrow c-j > p \\
             &\Leftrightarrow c-p > j.
            \end{align*}
            Now, notice that $c-p \geq 2a-p$. Moreover,
            \begin{align*}
            2a-p > j &\Leftrightarrow 2a > j+p = a+b \\
            &\Leftrightarrow a > b,
            \end{align*}
             which is true by assumption. In particular, $c-p \geq 2a -p > j$, and thus $(j/c)-((c-p)/(j+p) < 0$ as claimed.
            \end{itemize}
        \end{description}
    \end{description}
In summary, we have shown that if $T \neq \BE{n}$, then $T$ does not have maximum stairs2 index. In particular, the binary echelon tree $\BE{n}$ is the \emph{unique} rooted binary tree with $n$ leaves with maximum stairs2 index on $\BT{n}$. This completes the proof.
\end{proof}

\subsection{Two formulas for the maximum value of the stairs2 index} \label{Subsec:MaxValue}
In the previous section, we showed that for any given $n \in \mathbb{N}_{\geq 1}$, the binary echelon tree $\BE{n}$ is the unique rooted binary tree on $n$ leaves with maximum stairs2 index. We now use this fact to obtain formulas for the maximum stairs2 index itself. Thus, let $\st{n}$ denote the maximum value of the stairs2 index on $\BT{n}$, i.e., $\st{n} = \max\limits_{T \in \BT{n}} \st{T} = \stBE{n}$. Exploiting the structure of the binary echelon tree on $n$ leaves and the recursiveness of the stairs2 index (Lemma~\ref{l:recursive}), we directly obtain the following recursion for $\st{n}$.

\begin{cor}\label{cor:recursion}
Let $n \in \mathbb{N}_{\geq 0}$. If $n \in \{0,1\}$, we have $\st{n}=0$. Otherwise, if $n \geq 2$, let $k = 2^{\lfloor \log n \rfloor}$. Then, 
\begin{align*}
    \st{n} &= \frac{1}{n-1} \cdot \left( (k - 1) + (n-k-1) \cdot \st{n-k} + \frac{n-k}{k} \right)
\end{align*}
\end{cor}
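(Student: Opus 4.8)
The plan is to reduce the statement to three ingredients that are already available: Theorem~\ref{thm:BE-unique}, which tells us that the maximiser of the stairs2 index on $\BT{n}$ is the binary echelon tree $\BE{n}$ (so that $\st{n} = \stBE{n}$); the recursion of Lemma~\ref{l:recursive}; and the value $\st{\fb{h}} = 1$ from Proposition~\ref{prop:fb}. The argument is then essentially bookkeeping, so I do not expect any genuine obstacle.

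First I would dispose of the base cases: for $n \in \{0,1\}$ the only element of $\BT{n}$ is $\BE{n}$, and $\st{\BE{n}} = 0$ by definition of the stairs2 index, so $\st{n} = 0$. Now fix $n \geq 2$ and set $k = 2^{\lfloor \log n \rfloor}$, the largest power of two with $k \leq n$; then $k \leq n < 2k$, so $0 \leq n-k < k$. By Theorem~\ref{thm:BE-unique} it suffices to evaluate $\stBE{n}$.

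I would then split according to whether $n$ is a power of two, since in that case the $k$ in the corollary equals $n$, which is \emph{not} the value of $k$ used in the recursive definition of $\BE{n}$ (there $k=n/2$); the formula must therefore be verified by a direct computation. If $n = 2^h$, then $\BE{n} = \fb{h}$ and $\stBE{n} = 1$ by Proposition~\ref{prop:fb}, while the right-hand side of the claimed identity is $\tfrac{1}{n-1}\bigl((n-1) + (n-k-1)\st{0} + \tfrac{n-k}{k}\bigr) = \tfrac{1}{n-1}\bigl((n-1) - \st{0}\bigr) = 1$ because $k=n$ and $\st{0}=0$; so both sides equal $1$. If $n$ is not a power of two, then (since $k\neq n$) we have $n/2 < k < n$, and the recursive definition of the binary echelon tree gives $\BE{n} = \bigl(\fb{\log k},\, \BE{n-k}\bigr)$. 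As $k > n/2 > n-k \geq 1$, this is precisely the standard decomposition with $n_1 = k \geq n_2 = n-k$, so Lemma~\ref{l:recursive} yields
\[
\stBE{n} \;=\; \frac{(k-1)\cdot\st{\fb{\log k}} \;+\; (n-k-1)\cdot\stBE{n-k} \;+\; \tfrac{n-k}{k}}{n-1}.
\]
Because $n \geq 2$ forces $\log k \geq 1$, Proposition~\ref{prop:fb} gives $\st{\fb{\log k}} = 1$; and $\stBE{n-k} = \st{n-k}$ either by Theorem~\ref{thm:BE-unique} (for $n-k \geq 1$) or by the base case (for $n-k = 0$). Substituting these gives the claimed recursion.

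The only place that calls for a moment's attention is precisely the power-of-two case, where the index $k = 2^{\lfloor \log n \rfloor}$ in the statement differs from the index of the $\BE{}$-recursion; one checks that with the convention $\st{0}=0$ the formula still collapses to $\st{n}=1$, in agreement with Proposition~\ref{prop:fb}. (One could instead try to treat all $n$ uniformly by writing $\fb{h} = (\fb{h}, \text{empty tree})$ and invoking the Remark after Lemma~\ref{l:recursive}, but that application of the recursion is tautological and still needs Proposition~\ref{prop:fb} for the value $1$, so the two-case presentation is cleaner.)
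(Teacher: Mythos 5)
Your proof is correct and follows essentially the same route the paper intends: the paper states this corollary without proof as a direct consequence of Theorem~\ref{thm:BE-unique}, the decomposition $\BE{n}=(\fb{\log k},\BE{n-k})$, Lemma~\ref{l:recursive}, and Proposition~\ref{prop:fb}. Your explicit check of the power-of-two case, where the corollary's $k=2^{\lfloor\log n\rfloor}=n$ differs from the $k=n/2$ in the recursive definition of $\BE{n}$, is a detail the paper glosses over and is worth having.
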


\noindent We remark that for even leaf numbers, we can obtain the following alternative simple recursion, which follows from the fact that the binary echelon tree on $2n$ leaves can be obtained from the binary echelon tree on $n$ leaves by replacing each of the $n$ leaves with a cherry, i.e., a pair of leaves with a common parent. 
\begin{align*}
     \st{2n} &= \frac{1}{2n-1} \cdot \left( (n-1) \cdot \st{n} + n \right)
\end{align*}

We now use the recursion of Corollary~\ref{cor:recursion} to derive a closed expression for $\st{n}$.

\begin{cor} \label{cor:sum}
Let $n \in \mathbb{N}_{\geq 0}$. If $n \in \{0,1\}$, we have $\st{n}=0$. Otherwise, if $n \geq 2$ with $n = \sum\limits_{i=1}^\ell 2^{a_i}$ with $\ell \geq 1$ and $a_1, \ldots, < a_\ell \in \mathbb{N}_{\geq 0}$ such that $a_1 < a_2 < \ldots a_\ell$, then
    \begin{align*}
        \st{n} = \frac{1}{n-1} \cdot \left( \sum\limits_{i=1}^\ell \left(2^{a_i}-1 \right) + \sum\limits_{i=1}^{\ell-1} \frac{\sum\limits_{j=1}^{i} 2^{a_j}}{2^{a_{i+1}}} \right).
    \end{align*}
\end{cor}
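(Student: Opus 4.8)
The plan is to derive the closed formula by unrolling the recursion of Corollary~\ref{cor:recursion}. Writing $n = \sum_{i=1}^\ell 2^{a_i}$ with $a_1 < a_2 < \cdots < a_\ell$, the largest power of two below $n$ is $k = 2^{a_\ell}$, so $n - k = \sum_{i=1}^{\ell-1} 2^{a_i}$, which has exactly one fewer term in its binary expansion. This immediately suggests an induction on $\ell$, the number of ones in the binary expansion of $n$. I would let $m_i = \sum_{j=1}^i 2^{a_j}$ denote the partial sums, so that $m_\ell = n$, $m_{\ell-1} = n-k$, and in general stripping the top bit takes $m_i$ to $m_{i-1}$; the base case is $\ell = 1$ (i.e.\ $n$ a power of two), where $\st{n} = 1$ and the formula reduces to $\frac{1}{n-1}(n-1)$, which checks out (the second sum is empty).

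For the inductive step, I would assume the formula holds for $n - k = m_{\ell-1}$, which has binary expansion $\sum_{i=1}^{\ell-1} 2^{a_i}$, so by the inductive hypothesis
\begin{align*}
    (m_{\ell-1} - 1)\cdot \st{m_{\ell-1}} = \sum_{i=1}^{\ell-1}\left(2^{a_i}-1\right) + \sum_{i=1}^{\ell-2} \frac{m_i}{2^{a_{i+1}}}.
\end{align*}
Then I would substitute this into Corollary~\ref{cor:recursion}, namely
\begin{align*}
    (n-1)\cdot \st{n} = (k-1) + (n-k-1)\cdot \st{n-k} + \frac{n-k}{k},
\end{align*}
using $k = 2^{a_\ell}$, $n - k = m_{\ell-1}$, and $n - k - 1 = m_{\ell-1} - 1$. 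The term $(k-1)$ supplies the missing $i = \ell$ summand $2^{a_\ell}-1$ of the first sum, and the term $\frac{n-k}{k} = \frac{m_{\ell-1}}{2^{a_\ell}}$ is precisely the missing $i = \ell-1$ summand of the second sum, so the two sums telescope back up to their full ranges $1 \le i \le \ell$ and $1 \le i \le \ell-1$ respectively. This yields exactly the claimed expression for $(n-1)\cdot\st{n}$.

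There is essentially no hard obstacle here: the only thing to be careful about is bookkeeping the index ranges and the empty-sum conventions when $\ell \in \{1,2\}$, and confirming that the "$+\,\frac{n-k}{k}$" correction lands at index $i = \ell - 1$ in the second sum (since $m_{\ell-1} = \sum_{j=1}^{\ell-1} 2^{a_j}$ and the denominator should be $2^{a_\ell}$, which matches $2^{a_{i+1}}$ with $i = \ell-1$). I would also double-check the edge case $n = 2$ (where $\ell = 2$, $a_1 = 0$, but wait --- $2 = 2^1$ has $\ell = 1$; the first genuinely two-term case is $n = 3 = 2^0 + 2^1$) to make sure the formula gives $\st{3} = \frac{1}{2}(1 + 1 + \frac{1}{2}) = \frac{1}{2}$, matching the direct computation for the unique tree on three leaves. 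One alternative I would keep in mind is proving it directly from the definition by summing the smaller/larger leaf-ratios over the inner vertices of $\BE{n}$, exploiting that $\BE{n}$ is a "staircase" of fully balanced blocks $\fb{a_\ell}, \fb{a_{\ell-1}}, \ldots$; each fully balanced block contributes $1$ per internal vertex (giving $\sum_i (2^{a_i}-1)$) and each "joint" between the suffix tree on $m_i$ leaves and the next block contributes the ratio $m_i / 2^{a_{i+1}}$ --- but the inductive route via Corollary~\ref{cor:recursion} is cleaner and avoids re-deriving this block structure.
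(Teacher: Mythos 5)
Your proposal is correct and matches the paper's own proof: both establish the formula by induction (the paper inducts on $n$, you on $\ell$, which is immaterial since stripping the top bit decreases both) and both substitute the inductive hypothesis for $n-k=\sum_{i=1}^{\ell-1}2^{a_i}$ into the recursion of Corollary~\ref{cor:recursion}, with $(k-1)$ supplying the $i=\ell$ term of the first sum and $\frac{n-k}{k}$ the $i=\ell-1$ term of the second. Your bookkeeping of the index ranges and edge cases is accurate, so there is nothing to add.
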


\begin{proof}
We prove this statement by induction on $n$. For $n \in \{0,1\}$, $\st{n}=0$ by definition. For $n=2$, we have $\st{n}=1$. On the other hand, $2=2^1$, i.e., $\ell=1$ and $a_1=1$. Thus, 
\begin{align*}
    \frac{1}{n-1} \cdot \left( \sum\limits_{i=1}^\ell \left(2^{a_i}-1 \right) + \sum\limits_{i=1}^{\ell-1} \frac{\sum\limits_{j=1}^{i} 2^{a_j}}{2^{a_{i+1}}} \right) &= \frac{1}{1} \cdot \left( (2^1-1) + 0 \right) = 1,
\end{align*}
which establishes the base case. Assume that the claimed formula holds for all $n' < n$. Now, consider $n = \sum\limits_{i=1}^\ell 2^{a_i}$ with $\ell \geq 1$ and $a_1, \ldots, a_\ell \in \mathbb{N}_{\geq 0}$ such that $a_1 < a_2 < \ldots < a_\ell$. We now use the recursion from Corollary~\ref{cor:recursion} to establish the formula for $n$. Therefore, notice that by construction, $k = 2^{\lfloor \log n \rfloor} = 2^{a_\ell}$ and $n-k = \sum\limits_{i=1}^{\ell-1} 2^{a_i}$. Using the inductive hypothesis, we thus obtain
\begin{align*}
    \st{n} &= \frac{1}{n-1} \cdot \left( (k - 1) + (n-k-1) \cdot \st{n-k} + \frac{n-k}{k} \right) \\
    &= \frac{1}{n-1} \cdot \left(  \left(2^{a_\ell}-1 \right) + \left( \sum\limits_{i=1}^{\ell-1} (2^{a_i}-1) + \sum\limits_{i=1}^{\ell-2} \frac{\sum\limits_{j=1}^{i} 2^{a_j}}{2^{a_{i+1}}} \right) + \frac{\sum\limits_{i=1}^{\ell-1} 2^{a_i}}{2^{a_\ell}}\right) \\
    &= \frac{1}{n-1} \cdot \left( \sum\limits_{i=1}^\ell \left(2^{a_i}-1 \right) + \sum\limits_{i=1}^{\ell-1} \frac{\sum\limits_{j=1}^{i} 2^{a_j}}{2^{a_{i+1}}} \right).
\end{align*}
This completes the proof.
\end{proof}

\section{Concluding remarks}\label{Sec:Discussion}
In this paper, we have answered an open question from the literature regarding the maximum value of the stairs2 balance index and the trees that achieve it. We have shown that for all leaf numbers, there is a unique rooted binary tree, namely the binary echelon tree $\BE{n}$, that maximizes the stairs2 index, and have obtained formulas for this maximum value. It is interesting to note that the binary echelon tree is contained in the set of so-called binary weight trees in the sense of \citet{Kersting2021}, and is thus also an extremal tree for the stairs1 index~\cite{Norstrom2012}, the symmetry nodes index~\cite{Kersting2021}, and Roger's $J$ index~\cite{Rogers1996} (see also \cite{Fischer2023}). Importantly, however, the binary echelon tree is generally not the unique most balanced tree for these three indices, which distinguishes the stairs2 index from them. It is also interesting to note that while there are other tree (im)balance indices with a unique most balanced tree such as the total cophenetic index~\cite{Mir2013}, the quadratic Colless index~\cite{Bartoszek2021}, or the rooted quartet index restricted to binary trees~\cite{Coronado2019}, the latter all consider the so-called \emph{maximally balanced tree}~\cite{Mir2013} as most balanced, which generally does not coincide with the binary echelon tree. The stairs2 index thus seems to capture tree balance from a view point distinct from other indices. It would be interesting interesting to analyze the implications of this property for further downstream analyses (like model testing, see, e.g.~\cite{Blum2005}) in future research. 
Additionally, we remark that while we have filled the gap in the literature concerning the maximum value of the stairs2 index, there are still open questions to be explored in future research. As indicated by \citet{Fischer2023}, it is for instance unknown what the expected value and variance of the stairs2 index under the Yule and uniform models for phylogenetic trees are. 
Finally, it would also be interesting to analyze whether the stairs2 index or a generalization of it can be used as a balance index for phylogenetic networks, and if so, what its properties are.

\bibliographystyle{abbrvnat}
\bibliography{References.bib}

\begin{thebibliography}{9}
\providecommand{\natexlab}[1]{#1}
\providecommand{\url}[1]{\texttt{#1}}
\expandafter\ifx\csname urlstyle\endcsname\relax
  \providecommand{\doi}[1]{doi: #1}\else
  \providecommand{\doi}{doi: \begingroup \urlstyle{rm}\Url}\fi

\bibitem[Bartoszek et~al.(2021)Bartoszek, Coronado, Mir, and Rosselló]{Bartoszek2021}
K.~Bartoszek, T.~M. Coronado, A.~Mir, and F.~Rosselló.
\newblock Squaring within the {C}olless index yields a better balance index.
\newblock \emph{Mathematical Biosciences}, 331:\penalty0 108503, Jan. 2021.
\newblock \doi{10.1016/j.mbs.2020.108503}.

\bibitem[Blum and Fran\c{c}ois(2005)]{Blum2005}
M.~G. Blum and O.~Fran\c{c}ois.
\newblock On statistical tests of phylogenetic tree imbalance: {T}he {S}ackin and other indices revisited.
\newblock \emph{Mathematical Biosciences}, 195\penalty0 (2):\penalty0 141–153, June 2005.
\newblock \doi{10.1016/j.mbs.2005.03.003}.

\bibitem[Colijn and Gardy(2014)]{Colijn2014}
C.~Colijn and J.~Gardy.
\newblock Phylogenetic tree shapes resolve disease transmission patterns.
\newblock \emph{Evolution, Medicine, and Public Health}, 2014\penalty0 (1):\penalty0 96--108, Jan. 2014.
\newblock \doi{10.1093/emph/eou018}.

\bibitem[Coronado et~al.(2019)Coronado, Mir, Rosselló, and Valiente]{Coronado2019}
T.~M. Coronado, A.~Mir, F.~Rosselló, and G.~Valiente.
\newblock A balance index for phylogenetic trees based on rooted quartets.
\newblock \emph{Journal of Mathematical Biology}, 79\penalty0 (3):\penalty0 1105–1148, June 2019.
\newblock \doi{10.1007/s00285-019-01377-w}.

\bibitem[Fischer et~al.(2023)Fischer, Herbst, Kersting, K{\"u}hn, and Wicke]{Fischer2023}
M.~Fischer, L.~Herbst, S.~Kersting, A.~L. K{\"u}hn, and K.~Wicke.
\newblock \emph{Tree Balance Indices: A Comprehensive Survey}.
\newblock Springer International Publishing, Cham, Switzerland, 1st edition, Nov. 2023.

\bibitem[Kersting and Fischer(2021)]{Kersting2021}
S.~J. Kersting and M.~Fischer.
\newblock Measuring tree balance using symmetry nodes — {A} new balance index and its extremal properties.
\newblock \emph{Mathematical Biosciences}, 341:\penalty0 108690, Nov. 2021.
\newblock \doi{10.1016/j.mbs.2021.108690}.

\bibitem[Mir et~al.(2013)Mir, Rosselló, and Rotger]{Mir2013}
A.~Mir, F.~Rosselló, and L.~Rotger.
\newblock A new balance index for phylogenetic trees.
\newblock \emph{Mathematical Biosciences}, 241\penalty0 (1):\penalty0 125--136, Jan. 2013.
\newblock \doi{10.1016/j.mbs.2012.10.005}.

\bibitem[Norstr\"{o}m et~al.(2012)Norstr\"{o}m, Prosperi, Gray, Karlsson, and Salemi]{Norstrom2012}
M.~M. Norstr\"{o}m, M.~C. Prosperi, R.~R. Gray, A.~C. Karlsson, and M.~Salemi.
\newblock {PhyloTempo}: {A} set of {R} scripts for assessing and visualizing temporal clustering in genealogies inferred from serially sampled viral sequences.
\newblock \emph{Evolutionary Bioinformatics}, 8:\penalty0 EBO.S9738, Jan. 2012.
\newblock \doi{10.4137/ebo.s9738}.

\bibitem[Rogers(1996)]{Rogers1996}
J.~S. Rogers.
\newblock Central moments and probability distributions of three measures of phylogenetic tree imbalance.
\newblock \emph{Systematic Biology}, 45\penalty0 (1):\penalty0 99--110, Mar. 1996.
\newblock \doi{10.1093/sysbio/45.1.99}.

\end{thebibliography}
 
\end{document}